\newtheorem{Definition}{Definition}
\newtheorem{Lemma}[Definition]{Lemma}
\newtheorem{Theorem}[Definition]{Theorem}
\newtheorem{Remark}[Definition]{Remark}
\newtheorem{Example}[Definition]{Example}
\title{General coupled semirings of residuated lattices\thanks{Preprint of an article published by Elsevier in {\it Fuzzy Sets and Systems} {\bf303} (2016), 128-135. It is available online at: {\texttt https://www.sciencedirect.com/science/article/pii/S0165011415005898}.}}
\author{Ivan~Chajda and Helmut~L\"anger}
\date{}
\begin{document}
\footnotetext[1]{Support of the research of both authors by the bilateral project "New perspectives on residuated posets", supported by the Austrian Science Fund (FWF), project I~1923-N25, and the Czech Science Foundation (GA\v CR), project 15-34697L, as well as by the project "Ordered structures for algebraic logic", supported by AKTION Austria -- Czech Republic, project 71p3, is gratefully acknowledged.}
\footnotetext[2]{helmut.laenger@tuwien.ac.at, telephone: +4315880110412, fax: +4315880110499}
\maketitle
\begin{abstract}
Di~Nola and Gerla showed that MV-algebras and coupled semirings are in a natural one-to-one correspondence. We generalize this correspondence to residuated lattices satisfying the double negation law.
\end{abstract} 

{\bf AMS Subject Classification:} 06B99, 16Y60

{\bf Keywords:} residuated lattice, double negation law, semiring, general coupled semiring

It was shown by Di~Nola and Gerla (\cite{DG}, \cite{Ge}) that to every MV-algebra there can be assigned a so-called coupled semiring which bears all the information on that MV-algebra, i.~e., the latter can be recovered by its assigned coupled semiring. This fact inspired us to modify the concept of a coupled semiring in order to get a similar representation for commutative basic algebras (\cite{CL1}) or for general basic algebras (\cite{CL2}).

Every MV-algebra is indeed a residuated lattice satisfying the double negation law, the prelinearity and the divisibility condition (see \cite{Belo} for details). Hence we try to find a representation by means of some sort of coupled semirings also for the more general class of residuated lattices. In fact, we are successful in the case where the double negation law is assumed.

This shows that the construction of a coupled semiring from \cite{DG} and \cite{Ge} is quite general and it can be applied in the fairly general case of residuated lattices satisfying the double negation law. For similar categorical considerations see \cite{Bell}.

Finally, we want to stress the importance of semirings treated in the paper in applications and in the context of tropical geometry, see e.~g.\ \cite P.

We start with the definition of a residuated lattice.

\begin{Definition}\label{def1}
A {\em residuated lattice} is an algebra ${\mathcal L}=(L,\vee,\wedge,\otimes,\rightarrow,0,1)$ of type $(2,2,2,$ $2,0,0)$ satisfying the following axioms for all $x,y,z\in L$:
\begin{enumerate}
\item[{\rm(i)}] $(L,\vee,\wedge,0,1)$ is a bounded lattice.
\item[{\rm(ii)}] $(L,\otimes,1)$ is a commutative monoid.
\item[{\rm(iii)}] $x\leq y\rightarrow z$ if and only if $x\otimes y\leq z$
\end{enumerate}
\end{Definition}

\begin{Remark}\label{rem1}
Condition {\rm(iii)} is called the {\em adjointness property}.
\end{Remark}

As a source for elementary properties of residuated lattices see the monograph by \\
B\v elohl\'avek {\rm(\cite{Belo})}. We will work with residuated lattices having one more property.

\begin{Definition}\label{def2}
Let ${\mathcal L}=(L,\vee,\wedge,\otimes,\rightarrow,0,1)$ be a residuated lattice. On $L$ we define two further operations as follows:
\begin{eqnarray*}
\neg x & := & x\rightarrow0\mbox{ and} \\
x\oplus y & := & \neg(\neg x\otimes\neg y)
\end{eqnarray*}
for all $x,y\in L$. Further, we say that ${\mathcal L}$ satisfies the {\em double negation law} if $\neg\neg x=x$ for all $x\in L$.
\end{Definition}

If $(L,\vee,\wedge,\otimes,\rightarrow,0,1)$ is a residuated lattice satisfying the double negation law then $(L,\oplus,\neg,0)$ need not be an MV-algebra. This can be seen from the following example:

\begin{Example}\label{ex2}
{\rm(}cf.\ {\rm\cite{RS})} If $(L,\vee,\wedge,0,1)$ denotes the bounded lattice given by the following Hasse diagram:

\vspace*{-5mm}

\begin{center}
\setlength{\unitlength}{7mm}
\begin{picture}(6,10)
\put(3,1){\circle*{.2}}
\put(3,3){\circle*{.2}}
\put(1,5){\circle*{.2}}
\put(5,5){\circle*{.2}}
\put(3,7){\circle*{.2}}
\put(3,9){\circle*{.2}}
\put(3,1){\line(0,1){2}}
\put(3,3){\line(-1,1){2}}
\put(3,3){\line(1,1){2}}
\put(3,7){\line(-1,-1){2}}
\put(3,7){\line(1,-1){2}}
\put(3,7){\line(0,1){2}}
\put(2.85,.35){$0$}
\put(3.4,2.75){$a$}
\put(.5,5){$b$}
\put(5.35,5){$c$}
\put(3.4,7.05){$d$}
\put(2.85,9.3){$1$}
\end{picture}
\end{center}

\vspace*{-5mm}

and we define binary operations $\otimes$ and $\rightarrow$ on $L$ as follows:
\[
\begin{array}{c|cccccc}
\otimes & 0 & a & b & c & d & 1 \\
\hline
0 & 0 & 0 & 0 & 0 & 0 & 0 \\
a & 0 & 0 & 0 & 0 & 0 & a \\
b & 0 & 0 & b & 0 & b & b \\
c & 0 & 0 & 0 & c & c & c \\
d & 0 & 0 & b & c & d & d \\
1 & 0 & a & b & c & d & 1
\end{array}
\quad\quad\quad
\begin{array}{c|cccccc}
\rightarrow & 0 & a & b & c & d & 1 \\
\hline
0 & 1 & 1 & 1 & 1 & 1 & 1 \\
a & d & 1 & 1 & 1 & 1 & 1 \\
b & c & c & 1 & c & 1 & 1 \\
c & b & b & b & 1 & 1 & 1 \\
d & a & a & b & c & 1 & 1 \\
1 & 0 & a & b & c & d & 1
\end{array}
\]
then we have
\[
\begin{array}{c|cccccc}
x & 0 & a & b & c & d & 1 \\
\hline
\neg x & 1 & d & c & b & a & 0
\end{array}
\quad\quad\quad
\begin{array}{c|cccccc}
\oplus & 0 & a & b & c & d & 1 \\
\hline
0 & 0 & a & b & c & d & 1 \\
a & a & a & b & c & 1 & 1 \\
b & b & b & b & c & 1 & 1 \\
c & c & c & 1 & c & 1 & 1 \\
d & d & 1 & 1 & 1 & 1 & 1 \\
1 & 1 & 1 & 1 & 1 & 1 & 1
\end{array}
\]
and $(L,\vee,\wedge,\otimes,\rightarrow,0,1)$ is a residuated lattice satisfying the double negation law which is neither prelinear nor divisible and hence not an {\rm MV}-algebra.
\end{Example}

The following properties of residuated lattices are well-known (cf.\ Theorems~2.17, 2.25, 2.27, 2.30 and 2.40 of \cite{Belo}).

\begin{Lemma}\label{lem1}
Let ${\mathcal L}=(L,\vee,\wedge,\otimes,\rightarrow,0,1)$ be a residuated lattice and $a,b,c\in L$. Then the following hold:
\begin{enumerate}
\item[{\rm(i)}] $a\leq b$ if and only if $a\rightarrow b=1$,
\item[{\rm(ii)}] $a\rightarrow1=1$,
\item[{\rm(iii)}] $a\otimes0=0$,
\item[{\rm(iv)}] $a\otimes(b\vee c)=(a\otimes b)\vee(a\otimes c)$,
\item[{\rm(v)}] $a\rightarrow b=((a\rightarrow b)\rightarrow b)\rightarrow b$,
\item[{\rm(vi)}] $\neg\neg\neg a=\neg a$,
\item[{\rm(vii)}] $\neg0=1$,
\item[{\rm(viii)}] $\neg1=0$ and
\item[{\rm(ix)}] $\neg(a\vee b)=\neg a\wedge\neg b$.
\end{enumerate}
If, moreover, ${\mathcal L}$ satisfies the double negation law then
\begin{enumerate}
\item[{\rm(x)}] $a\rightarrow b=\neg(a\otimes\neg b)$.
\end{enumerate}
\end{Lemma}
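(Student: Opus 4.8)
The plan is to prove the two elements equal by showing that they have exactly the same lower set, i.e. that $x\le\neg(a\otimes\neg b)$ holds if and only if $x\le a\rightarrow b$ holds, for every $x\in L$. This is a ``$y$-style'' argument: once both elements are below-dominated by precisely the same $x$'s, antisymmetry of the lattice order forces them to coincide. The only ingredients needed are the adjointness property (iii), the commutativity and associativity of $\otimes$ from (ii), the definition $\neg x=x\rightarrow0$, and — at a single decisive spot — the double negation law.

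Concretely, I would fix an arbitrary $x\in L$, rewrite $\neg(a\otimes\neg b)=(a\otimes\neg b)\rightarrow0$, and then push the factor $a\otimes\neg b$ across the residuation by applying adjointness, regroup using that $(L,\otimes,1)$ is a commutative monoid, and apply adjointness a second time:
\[
x\le\neg(a\otimes\neg b)\Leftrightarrow x\otimes(a\otimes\neg b)\le0\Leftrightarrow(x\otimes a)\otimes\neg b\le0\Leftrightarrow x\otimes a\le\neg b\rightarrow0=\neg\neg b.
\]
At this last term the double negation law enters, replacing $\neg\neg b$ by $b$, so that a final use of adjointness gives $x\otimes a\le b$ if and only if $x\le a\rightarrow b$. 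Chaining these equivalences yields $x\le\neg(a\otimes\neg b)$ if and only if $x\le a\rightarrow b$ for all $x\in L$.

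To finish, I would substitute the two elements into this equivalence in turn: taking $x=\neg(a\otimes\neg b)$ gives $\neg(a\otimes\neg b)\le a\rightarrow b$, and taking $x=a\rightarrow b$ gives the reverse inequality, so antisymmetry delivers $a\rightarrow b=\neg(a\otimes\neg b)$. The argument is essentially routine, and I expect no genuine obstacle; the only points demanding care are the correct associative/commutative regrouping of $x\otimes(a\otimes\neg b)$ as $(x\otimes a)\otimes\neg b$ and the recognition that the double negation law is used at exactly one step (without it one would only obtain $a\rightarrow\neg\neg b$ on the right-hand side). An equivalent route, should a cleaner presentation be preferred, is to first record the exportation identity $(a\otimes b)\rightarrow c=a\rightarrow(b\rightarrow c)$ — itself an immediate consequence of adjointness applied twice — and then specialize it to $b:=\neg b$ and $c:=0$, again invoking $\neg\neg b=b$ at the end.
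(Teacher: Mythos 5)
Your argument for item (x) is correct: each of the three applications of adjointness is legitimate, the regrouping $x\otimes(a\otimes\neg b)=(x\otimes a)\otimes\neg b$ is licensed by the commutative-monoid axiom, the double negation law is used exactly once, at $\neg b\rightarrow0=\neg\neg b=b$, and specializing $x$ to each of the two sides plus antisymmetry closes the equality. Be aware, however, that the paper offers no proof of this lemma at all: it is dispatched as well-known, with a citation to Theorems 2.17, 2.25, 2.27, 2.30 and 2.40 of B\v elohl\'avek's monograph \cite{Belo}. So your lower-set (Galois-connection) argument is not so much a different route from the paper as a self-contained replacement for a citation; your alternative via the exportation identity $(a\otimes b)\rightarrow c=a\rightarrow(b\rightarrow c)$ with $c=0$ is the form the textbook proof usually takes, and it has the small virtue of isolating the identity $\neg(a\otimes\neg b)=a\rightarrow\neg\neg b$, which holds in every residuated lattice, with double negation entering only at the final substitution --- exactly as your parenthetical remark observes.

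The one genuine gap is coverage: the lemma asserts ten statements, and your proposal addresses only (x), saying nothing about (i)--(ix). Those are equally part of the claim, so a complete write-up must either prove them or, as the authors do, cite the literature. Each yields to the same technique in a line or two: (i) from $a\leq b\Leftrightarrow1\otimes a\leq b\Leftrightarrow1\leq a\rightarrow b$; (ii) from (i); (iii) and (viii) from adjointness together with $0$ being the least element and $1$ the monoid unit; (iv) by comparing upper sets, since $a\otimes(b\vee c)\leq d\Leftrightarrow b\vee c\leq a\rightarrow d\Leftrightarrow b\leq a\rightarrow d\mbox{ and }c\leq a\rightarrow d\Leftrightarrow(a\otimes b)\vee(a\otimes c)\leq d$; (ix) by comparing lower sets using (iv); (vi) is the instance $b=0$ of (v); and (vii) is immediate from (i). None of these is hard, but as it stands your proposal proves only the single item for which the double negation hypothesis is needed, and is therefore incomplete as a proof of the stated lemma.
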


The following lemma is straightforward.

\begin{Lemma}\label{lem3}
Let ${\mathcal L}=(L,\vee,\wedge,\otimes,\rightarrow,0,1)$ be a residuated lattice satisfying the double negation law and $a,b,c\in L$. Then the following hold:
\begin{enumerate}
\item[{\rm(i)}] $a\otimes b=\neg(\neg a\oplus\neg b)$,
\item[{\rm(ii)}] $\neg(a\wedge b)=\neg a\vee\neg b$ and
\item[{\rm(iii)}] $a\rightarrow b=\neg a\oplus b$.
\end{enumerate}
\end{Lemma}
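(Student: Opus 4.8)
The plan is to derive all three identities purely by unfolding the definition $x\oplus y=\neg(\neg x\otimes\neg y)$ and then simplifying with the double negation law $\neg\neg x=x$ together with the already-established items of Lemma~\ref{lem1}. No order relation or adjointness reasoning is needed beyond what is packaged into those items, so the whole argument reduces to substitution and the repeated cancellation of double negations.

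For (i), I would apply the definition of $\oplus$ to the arguments $\neg a$ and $\neg b$, obtaining $\neg a\oplus\neg b=\neg(\neg\neg a\otimes\neg\neg b)$. Collapsing the two inner double negations gives $\neg a\oplus\neg b=\neg(a\otimes b)$, and one further application of $\neg$ together with the double negation law yields $\neg(\neg a\oplus\neg b)=\neg\neg(a\otimes b)=a\otimes b$, which is the claim.

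For (ii), the idea is to use the known De~Morgan identity Lemma~\ref{lem1}(ix), namely $\neg(a\vee b)=\neg a\wedge\neg b$, but applied to the complemented arguments. Substituting $\neg a$ for $a$ and $\neg b$ for $b$ gives $\neg(\neg a\vee\neg b)=\neg\neg a\wedge\neg\neg b=a\wedge b$; negating both sides and using double negation once more produces $\neg a\vee\neg b=\neg(a\wedge b)$. For (iii), I would invoke Lemma~\ref{lem1}(x), which under the double negation law already supplies $a\rightarrow b=\neg(a\otimes\neg b)$, while expanding the definition of $\oplus$ gives $\neg a\oplus b=\neg(\neg\neg a\otimes\neg b)=\neg(a\otimes\neg b)$; the two right-hand sides coincide, so the identity follows.

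Since every step is a direct substitution, there is essentially no obstacle here, which matches the paper's remark that the lemma is straightforward. The only point requiring care is to apply the double negation law to exactly the right subterms and to cite Lemma~\ref{lem1}(ix) and (x) rather than re-deriving them, since it is precisely the availability of $\neg\neg x=x$ that turns the one-directional facts about $\neg$ and $\rightarrow$ into the symmetric De~Morgan and dualization statements asserted here.
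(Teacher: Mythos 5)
Your proof is correct, and it is exactly the routine verification the paper has in mind: the paper gives no proof at all, dismissing the lemma as ``straightforward,'' and your unfolding of the definition of $\oplus$, cancellation of double negations, and citations of Lemma~\ref{lem1}(ix) and (x) are precisely the intended computations. No gaps.
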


A further concept we need is that of a commutative semiring. Since within the literature there exist different definitions of this concept we present the definition taken from \cite{Go} or \cite{KS}.

\begin{Definition}\label{def3}
A {\em commutative semiring} is an algebra ${\mathcal S}=(S,+,\cdot,0,1)$ of type $(2,2,0,0)$ satisfying the following conditions for all $x,y,z\in S$:
\begin{enumerate}
\item[{\rm(i)}] $(S,+,0)$ and $(S,\cdot,1)$ are commutative monoids.
\item[{\rm(ii)}] $x(y+z)=xy+xz$
\item[{\rm(iii)}] $x0=0$
\end{enumerate}
\end{Definition}

In \cite{DG} and \cite{Ge} MV-algebras are represented by certain coupled semirings. In \cite{CL1} we used so-called coupled near semirings in order to represent commutative basic algebras. In \cite{CL2} we did the same job with coupled right near semirings for basic algebras. In order to represent residuated lattices we define the following notion:

\begin{Definition}\label{def5}
A {\em general coupled semiring} is an ordered triple
\[
((A,\vee,\cdot,0,1),(A,\wedge,\ast,1,0),\alpha)
\]
satisfying the following conditions for all $x,y\in A$:
\begin{enumerate}
\item[{\rm(i)}] $(A,\vee,\cdot,0,1)$ and $(A,\wedge,\ast,1,0)$ are commutative semirings.
\item[{\rm(ii)}] $(A,\vee,\wedge)$ is a lattice.
\item[{\rm(iii)}] $\alpha$ is an isomorphism from $(A,\vee,\cdot,0,1)$ to $(A,\wedge,\ast,1,0)$.
\item[{\rm(iv)}] $\alpha(\alpha(x))=x$
\item[{\rm(v)}] $x\leq y$ if and only if $\alpha(x)\ast y=1$
\end{enumerate}
\end{Definition}

We are now able to formulate and prove our first theorem.

\begin{Theorem}\label{th1}
Let ${\mathcal L}=(L,\vee,\wedge,\otimes,\rightarrow,0,1)$ be a residuated lattice satisfying the double negation law. Then
\[
{\bf C}({\mathcal L}):=((L,\vee,\otimes,0,1),(L,\wedge,\oplus,1,0),\neg)
\]
is a general coupled semiring.
\end{Theorem}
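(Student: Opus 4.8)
The plan is to verify the five conditions of Definition~\ref{def5} one at a time, letting the involution $\neg$ do most of the work by transporting the first semiring structure onto the second. Three of the conditions are essentially free. Condition~(ii) is immediate, since by Definition~\ref{def1}(i) the reduct $(L,\vee,\wedge,0,1)$ is a bounded lattice, so $(L,\vee,\wedge)$ is a lattice. Condition~(iv) is nothing but the double negation law, which is assumed. For condition~(v) I would chain two earlier results: by Lemma~\ref{lem3}(iii) we have $\neg x\oplus y=x\rightarrow y$, and by Lemma~\ref{lem1}(i) the equality $x\rightarrow y=1$ holds exactly when $x\leq y$; together these give $x\leq y$ if and only if $\neg x\oplus y=1$, as required.

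For condition~(i) I would first check directly that $(L,\vee,\otimes,0,1)$ is a commutative semiring in the sense of Definition~\ref{def3}. Here $(L,\vee,0)$ is a commutative monoid because $\vee$ is the join of the bounded lattice with bottom element $0$ (Definition~\ref{def1}(i)), $(L,\otimes,1)$ is a commutative monoid by Definition~\ref{def1}(ii), the distributive law $x\otimes(y\vee z)=(x\otimes y)\vee(x\otimes z)$ is Lemma~\ref{lem1}(iv), and $x\otimes0=0$ is Lemma~\ref{lem1}(iii).

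The key step, which simultaneously settles condition~(iii) and the remaining half of~(i), is to show that $\neg$ is an isomorphism from $(L,\vee,\otimes,0,1)$ onto $(L,\wedge,\oplus,1,0)$. I would verify that $\neg$ preserves each of the four operations: $\neg(x\vee y)=\neg x\wedge\neg y$ is Lemma~\ref{lem1}(ix); $\neg0=1$ and $\neg1=0$ are Lemma~\ref{lem1}(vii) and~(viii); and $\neg(x\otimes y)=\neg x\oplus\neg y$ follows by applying $\neg$ to Lemma~\ref{lem3}(i) and using the double negation law. By condition~(iv) the map $\neg$ is its own inverse, hence bijective, so it is an isomorphism. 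Since $(L,\vee,\otimes,0,1)$ is a commutative semiring and $\neg$ is a bijective homomorphism onto $(L,\wedge,\oplus,1,0)$, the target is an isomorphic copy of a commutative semiring and is therefore itself a commutative semiring; this finishes condition~(i) without any additional computation on the $(\wedge,\oplus)$ side.

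The only place where genuine care is needed, and which I expect to be the main obstacle, is the homomorphism identity $\neg(x\otimes y)=\neg x\oplus\neg y$. This is the single equation that truly relies on the double negation law, and it is precisely the step that would break down for a residuated lattice not satisfying that law; everything else is a routine assembly of the quoted lemmas.
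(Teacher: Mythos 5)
Your proposal is correct and takes essentially the same route as the paper: it likewise assembles $(L,\vee,\otimes,0,1)$ from Definition~\ref{def1} and Lemma~\ref{lem1}, transports this structure through the involutory isomorphism $\neg$ (Lemmata~\ref{lem1} and \ref{lem3}) to obtain $(L,\wedge,\oplus,1,0)$, and verifies condition~(v) by the chain $a\leq b$ iff $a\rightarrow b=1$ iff $\neg a\oplus b=1$. One small remark: the identity $\neg(x\otimes y)=\neg x\oplus\neg y$, which you single out as the main obstacle, is in fact immediate, since $\neg x\oplus\neg y=\neg(\neg\neg x\otimes\neg\neg y)=\neg(x\otimes y)$ by the definition of $\oplus$ together with the double negation law.
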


\begin{proof}
Let $a,b\in L$. According to Definition~\ref{def1}, $(L,\vee,\wedge)$ is a lattice. Because of Definition~\ref{def1} and Lemma~\ref{lem1}, $(L,\vee,\otimes,0,1)$ is a commutative semiring. According to Lemmata~\ref{lem1} and \ref{lem3}, $\neg$ is an involutory isomorphism from $(L,\vee,\otimes,0,1)$ to $(L,\wedge,\oplus,1,0)$ and the latter therefore a commutative semiring, too. Finally, because of Lemmata~\ref{lem1} and \ref{lem3} the following are equivalent:
\begin{eqnarray*}
a & \leq & b \\
a\rightarrow b & = & 1 \\
\neg a\oplus b & = & 1
\end{eqnarray*}
Summing up, ${\bf C}({\mathcal L})$ is a general coupled semiring.
\end{proof}

\begin{Remark}\label{rem3}
Hence, $(L,\wedge,\oplus,1,0)$ is the so-called min-plus semiring, thus if $L=\mathbb R\cup\{\infty\}$ then $(L,\wedge,\oplus)$ is nothing else than a tropical semiring, see e.~g.\ {\rm\cite P}.
\end{Remark}

If the residuated lattice ${\mathcal L}=(L,\vee,\wedge,\otimes,\rightarrow,0,1)$ is an MV-algebra then the general coupled semiring ${\bf C}({\mathcal L})$ coincides with that introduced in \cite{DG} and \cite{Ge}.

Now we are going to prove the converse.

\begin{Theorem}\label{th2}
Let ${\mathcal C}=((A,\vee,\cdot,0,1),(A,\wedge,\ast,1,0),\alpha)$ be a general coupled semiring and define
\[
x\rightarrow y:=\alpha(x)\ast y
\]
for all $x,y\in A$. Then
\[
{\bf L}({\mathcal C}):=(A,\vee,\wedge,\cdot,\rightarrow,0,1)
\]
is a residuated lattice satisfying the double negation law.
\end{Theorem}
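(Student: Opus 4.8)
The plan is to verify, for $\mathbf{L}(\mathcal{C})$, the three axioms of Definition~\ref{def1} and then the double negation law, extracting each ingredient from the five defining conditions of the general coupled semiring $\mathcal{C}$ in Definition~\ref{def5}. First I would establish that $(A,\vee,\wedge,0,1)$ is a bounded lattice. That $(A,\vee,\wedge)$ is a lattice is condition~(ii) directly. For the bounds I would use the two semiring structures of condition~(i): since $(A,\vee,0)$ is a commutative monoid, $0$ is neutral for $\vee$, so $0\vee x=x$ and hence $0\leq x$ for all $x$; dually, since $(A,\wedge,1)$ is a commutative monoid, $1$ is neutral for $\wedge$, so $x\wedge1=x$ and hence $x\leq1$. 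Thus $0$ and $1$ are the least and greatest elements. Axiom~(ii) of Definition~\ref{def1}, namely that $(A,\cdot,1)$ is a commutative monoid, is immediate from condition~(i) as well.

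The heart of the argument is the adjointness property~(iii) of Definition~\ref{def1}. I would rewrite both of its sides by means of condition~(v). On the one hand, $x\leq y\rightarrow z$ means $x\leq\alpha(y)\ast z$, which by~(v) is equivalent to $\alpha(x)\ast(\alpha(y)\ast z)=1$. On the other hand, $x\cdot y\leq z$ is, again by~(v), equivalent to $\alpha(x\cdot y)\ast z=1$. Now condition~(iii), that $\alpha$ is a semiring isomorphism, gives $\alpha(x\cdot y)=\alpha(x)\ast\alpha(y)$, and the associativity of $\ast$ (from $(A,\ast,0)$ being a commutative monoid) yields $(\alpha(x)\ast\alpha(y))\ast z=\alpha(x)\ast(\alpha(y)\ast z)$. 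Hence the two conditions coincide, establishing adjointness.

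Finally, for the double negation law I would compute $\neg x=x\rightarrow0=\alpha(x)\ast0$; since $0$ is the neutral element of $\ast$, this reduces to $\neg x=\alpha(x)$. Therefore $\neg\neg x=\alpha(\alpha(x))=x$ by condition~(iv), as required. The only step demanding genuine care is the adjointness computation: there one must correctly match the additive and multiplicative identities $0$ and $1$ across the two dual semiring structures and invoke both the homomorphism property of $\alpha$ and the associativity of $\ast$. Everything else is a direct translation of the defining conditions, so I expect the verification to be short once these identifications are made explicit.
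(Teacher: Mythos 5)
Your proposal is correct and follows essentially the same route as the paper's own proof: the identical chain of equivalences for adjointness (translating both sides via condition~(v), then using $\alpha(xy)=\alpha(x)\ast\alpha(y)$ and the associativity of $\ast$ coming from the monoid $(A,\ast,0)$), and the same computation $\neg x=\alpha(x)\ast0=\alpha(x)$ followed by condition~(iv) for the double negation law. The only difference is cosmetic: you spell out the boundedness of the lattice and the identification of the neutral elements slightly more explicitly than the paper does.
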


\begin{proof}
Let $a,b,c\in A$. According to Definition~\ref{def5}, $(A,\vee,\wedge)$ is a lattice. Since $0$ is the neutral element with respect to $\vee$, it is the least element of this lattice. Analogously, it follows that $1$ is the greatest element of this lattice. Moreover, $(A,\cdot,1)$ is a commutative monoid. In order to prove that ${\bf L}({\mathcal C})$ is a residuated lattice we have to check the adjointness property. Now according to Definition~\ref{def5} the following statements are equivalent:
\begin{eqnarray*}
ab & \leq & c \\
\alpha(ab)\ast c & = & 1 \\
(\alpha(a)\ast\alpha(b))\ast c & = & 1 \\
\alpha(a)\ast(\alpha(b)\ast c) & = & 1 \\
a & \leq & \alpha(b)\ast c \\
a & \leq & b\rightarrow c.
\end{eqnarray*}
Thus ${\bf L}({\mathcal C})$ is a residuated lattice. It remains to check the double negation law. Now we have
\[
\neg a=a\rightarrow0=\alpha(a)\ast0=\alpha(a)
\]
and hence
\[
\neg\neg a=\alpha(\alpha(a))=a.
\]
\end{proof}

Finally, we prove that the above correspondence between residuated lattices satisfying the double negation law and general coupled semirings is one-to-one.

\begin{Theorem}\label{th3}
Let ${\mathcal L}=(L,\vee,\wedge,\otimes,\rightarrow,0,1)$ be a residuated lattice satisfying the double negation law. Then ${\bf L}({\bf C}({\mathcal L}))={\mathcal L}$.
\end{Theorem}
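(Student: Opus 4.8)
The plan is to unwind the two constructions and then compare the resulting operations one at a time. First I would write out explicitly what $\mathbf{L}(\mathbf{C}(\mathcal{L}))$ is. By Theorem~\ref{th1}, $\mathbf{C}(\mathcal{L})=((L,\vee,\otimes,0,1),(L,\wedge,\oplus,1,0),\neg)$, so the general coupled semiring handed to $\mathbf{L}$ has underlying set $A=L$, first multiplication $\cdot=\otimes$, second multiplication $\ast=\oplus$, and $\alpha=\neg$. Applying the recipe of Theorem~\ref{th2} then yields $\mathbf{L}(\mathbf{C}(\mathcal{L}))=(L,\vee,\wedge,\otimes,\rightarrow',0,1)$, where the implication is the \emph{newly defined} operation $x\rightarrow'y:=\alpha(x)\ast y=\neg x\oplus y$.

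Next I would observe that the underlying set, both lattice operations $\vee$ and $\wedge$, the monoid operation $\otimes$, and the two constants $0$ and $1$ appearing in $\mathbf{L}(\mathbf{C}(\mathcal{L}))$ are literally the same as those of $\mathcal{L}$, since neither construction touches them. Hence the two algebras agree on every operation except possibly the implication, and it remains only to check that $\rightarrow'=\rightarrow$.

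For that final step I would invoke Lemma~\ref{lem3}(iii) directly: in any residuated lattice satisfying the double negation law one has $a\rightarrow b=\neg a\oplus b$ for all $a,b\in L$. This is precisely the definition of $\rightarrow'$, so $x\rightarrow'y=x\rightarrow y$ for all $x,y\in L$, and the two algebras coincide.

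I do not expect any genuine obstacle here: the entire content of the theorem rests on Lemma~\ref{lem3}(iii), and once the two constructions are unwound the identity $\rightarrow'=\rightarrow$ is immediate. The only point deserving a moment's attention is the bookkeeping—ensuring that the roles of the two semirings and of $\alpha$ are matched correctly when $\mathbf{C}(\mathcal{L})$ is substituted into the definition of $\mathbf{L}$—but this is purely notational.
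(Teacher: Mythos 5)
Your proposal is correct and follows exactly the paper's own argument: unwind the two constructions, observe that all operations except the implication are untouched, and identify the new implication $\neg x\oplus y$ with $\rightarrow$ via Lemma~\ref{lem3}(iii). The paper's proof is just a more compressed version of the same reasoning.
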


\begin{proof}
If ${\bf C}({\mathcal L})=((L,\vee,\otimes,0,1),(L,\wedge,\oplus,1,0),\neg)$ and ${\bf L}({\bf C}({\mathcal L}))=(L,\vee,\wedge,\otimes,\Rightarrow,0,1)$ and $a,b\in L$ then $a\Rightarrow b=\neg a\oplus b=a\rightarrow b$ according to Lemma~\ref{lem3}.
\end{proof}

\begin{Theorem}\label{th4}
Let ${\mathcal C}=((A,\vee,\cdot,0,1),(A,\wedge,\ast,1,0),\alpha)$ be a general coupled semiring. Then ${\bf C}({\bf L}({\mathcal C}))={\mathcal C}$.
\end{Theorem}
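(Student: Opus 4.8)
The plan is to unwind both constructions and compare the three components of $\mathbf{C}(\mathbf{L}(\mathcal{C}))$ with those of $\mathcal{C}$. Writing $\mathbf{L}(\mathcal{C})=(A,\vee,\wedge,\cdot,\rightarrow,0,1)$ with $x\rightarrow y=\alpha(x)\ast y$, the construction $\mathbf{C}$ produces the triple $((A,\vee,\otimes,0,1),(A,\wedge,\oplus,1,0),\neg)$, where $\otimes$ is the monoid operation of $\mathbf{L}(\mathcal{C})$, $\neg x=x\rightarrow0$, and $x\oplus y=\neg(\neg x\otimes\neg y)$. The lattice operations, the constants $0$ and $1$, and the join $\vee$ are carried through unchanged, so the first semiring component $(A,\vee,\otimes,0,1)$ is literally $(A,\vee,\cdot,0,1)$ because $\otimes=\cdot$.

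Next I would identify the negation. As already computed in the proof of Theorem~\ref{th2}, one has $\neg a=a\rightarrow0=\alpha(a)\ast0=\alpha(a)$, so the third component $\neg$ coincides with $\alpha$.

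It remains to show that the second semiring component agrees, that is, that $\oplus=\ast$; this is the only step requiring genuine computation. For $a,b\in A$ I would write
\[
a\oplus b=\neg(\neg a\otimes\neg b)=\alpha\bigl(\alpha(a)\cdot\alpha(b)\bigr),
\]
using $\neg=\alpha$ and $\otimes=\cdot$. Since by Definition~\ref{def5}(iii) the map $\alpha$ is an isomorphism from $(A,\vee,\cdot,0,1)$ to $(A,\wedge,\ast,1,0)$, it sends the product $\cdot$ to $\ast$, giving $\alpha(\alpha(a)\cdot\alpha(b))=\alpha(\alpha(a))\ast\alpha(\alpha(b))$. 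Applying the involution law Definition~\ref{def5}(iv) twice then yields $a\ast b$. Hence $a\oplus b=a\ast b$, so the two triples agree componentwise and $\mathbf{C}(\mathbf{L}(\mathcal{C}))=\mathcal{C}$.

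The argument has no real obstacle; the only thing to watch is the bookkeeping of which operation plays which role once both constructions have been applied. The essential ingredient is that $\alpha$ is a semiring isomorphism carrying $\cdot$ to $\ast$, together with the fact that $\alpha$ is involutory, which jointly collapse the detour through $\neg$ and $\otimes$ back to $\ast$.
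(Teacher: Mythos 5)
Your proposal is correct and follows essentially the same route as the paper: both identify $\neg a=a\rightarrow0=\alpha(a)\ast0=\alpha(a)$ and then compute $a\oplus b=\alpha(\alpha(a)\alpha(b))=\alpha(\alpha(a))\ast\alpha(\alpha(b))=a\ast b$ via the isomorphism property (iii) and the involution law (iv) of Definition~\ref{def5}. Your explicit remark that the first component $(A,\vee,\cdot,0,1)$ is carried through unchanged is left implicit in the paper but is the same observation.
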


\begin{proof}
If ${\bf L}({\mathcal C})=(A,\vee,\wedge,\cdot,\rightarrow,0,1)$, ${\bf C}({\bf L}({\mathcal C}))=((A,\vee,\cdot,0,1),(A,\wedge,\oplus,1,$ $0),\neg)$ and $a,b\in A$ then
\begin{eqnarray*}
\neg a & = & a\rightarrow0=\alpha(a)\ast0=\alpha(a)\mbox{ and} \\
a\oplus b & = & \neg(\neg a\cdot\neg b)=\alpha(\alpha(a)\alpha(b))=\alpha(\alpha(a))\ast\alpha(\alpha(b))=a\ast b.
\end{eqnarray*}
\end{proof}

In what follows, we are going to extend our investigation concerning the mutual relationship between residuated lattices satisfying the double negation law and semirings to the general case where no double negation law is assumed. As before, we will denote by $\leq$ the induced order of a residuated lattice.

\begin{Definition}\label{def4}
A {\em tied semiring} is an ordered triple
\[
((A,\vee,\cdot,0,1),(B,\wedge,\ast,1,0),\alpha)
\]
satisfying the following conditions for all $x,y\in B$:
\begin{enumerate}
\item[{\rm(i)}] $(A,\vee,\cdot,0,1)$ and $(B,\wedge,\ast,1,0)$ are commutative semirings and $B\subseteq A$.
\item[{\rm(ii)}] $(A,\vee,\wedge)$ a lattice.
\item[{\rm(iii)}] $\alpha$ is a homomorphism from $(A,\vee,\cdot,0,1)$ onto $(B,\wedge,\ast,1,0)$.
\item[{\rm(iv)}] $\alpha|B$ is a homomorphism from $(B,\wedge,\ast,1,0)$ to $(A,\vee,\cdot,0,1)$.
\item[{\rm(v)}] $\alpha(\alpha(x))=x$
\item[{\rm(vi)}] $x\leq y$ if and only if $\alpha(x)\ast y=1$
\end{enumerate}
\end{Definition}

\begin{Remark}\label{rem2}
Let ${\mathcal L}=(L,\vee,\wedge,\otimes,\rightarrow,0,1)$ be a residuated lattice. Put
\[
\neg L:=\{\neg x\,|\,x\in L\}.
\]
Because of {\rm(vi)} of Lemma~\ref{lem1}, $\neg L=\{x\in L\,|\,\neg\neg x=x\}$. It is well-known that $\neg L$ need not be a subuniverse of ${\mathcal L}$ if the double negation law is not assumed. It should be noted that there exists a subuniverse $A$ of ${\mathcal L}$ with the property that $\neg A$ is a subuniverse of ${\mathcal L}$, too, namely $A=\{0,1\}$.
\end{Remark}

The connection between residuated lattices not necessarily satisfying the double negation law and tied semirings is as follows:

\begin{Theorem}\label{th6}
Let ${\mathcal L}=(L,\vee,\wedge,\otimes,\rightarrow,0,1)$ be a residuated lattice and assume $A$ to be a subuniverse of ${\mathcal L}$ such that $\neg A:=\{\neg x\,|\,x\in A\}$ is a subuniverse of ${\mathcal L}$, too. Moreover, assume that the following condition holds:
\begin{enumerate}
\item[{\rm(i)}] $\neg(x\otimes y)=\neg x\oplus\neg y$ for all $x,y\in A$.
\end{enumerate}
Then
\[
{\bf Y}({\mathcal L},A):=((A,\vee,\otimes,0,1),(\neg A,\wedge,\oplus,1,0),\neg)
\]
is a tied semiring.
\end{Theorem}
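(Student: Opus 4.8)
The plan is to verify the six conditions of Definition~\ref{def4} one by one, taking $B=\neg A$ and $\alpha=\neg$. The organising observation, which I would record at the outset, is that although the double negation law need not hold on all of $L$, it does hold on $\neg A$: by Lemma~\ref{lem1}(vi) every element $\neg a$ satisfies $\neg\neg(\neg a)=\neg a$, so $\neg$ restricts to an involution on $\neg A$. Since $A$ is a subuniverse it is closed under $\neg$, whence $\neg A\subseteq A$, which supplies the inclusion in condition~(i); and both $A$ and $\neg A$ are sublattices of $L$ containing $0$ and $1$, which gives condition~(ii).

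For the first semiring and condition~(iii), I would first note that $(A,\vee,\otimes,0,1)$ is a commutative semiring exactly as in Theorem~\ref{th1} (Definition~\ref{def1} together with Lemma~\ref{lem1}), now restricted to the subuniverse $A$. Next I would show that $\neg$ is a surjective homomorphism from $(A,\vee,\otimes,0,1)$ onto $(\neg A,\wedge,\oplus,1,0)$: surjectivity is immediate from the definition of $\neg A$, and the four homomorphism identities are $\neg(x\vee y)=\neg x\wedge\neg y$ (Lemma~\ref{lem1}(ix)), $\neg(x\otimes y)=\neg x\oplus\neg y$ (hypothesis~(i)), $\neg0=1$ and $\neg1=0$ (Lemma~\ref{lem1}(vii),(viii)). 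This establishes condition~(iii), and as a byproduct a surjective homomorphic image of a commutative semiring is again a commutative semiring, so $(\neg A,\wedge,\oplus,1,0)$ is one, completing condition~(i). I would emphasise that hypothesis~(i) of the theorem is precisely the ingredient consumed here: under the double negation law it is automatic from the definition of $\oplus$, but in the general setting it must be assumed, and it is what allows $\neg$ to be only surjective (rather than an isomorphism, as in Definition~\ref{def5}).

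Conditions~(iv),~(v),~(vi) concern $x,y\in\neg A$. Condition~(v), $\neg\neg x=x$, is the involution property already noted. For~(iv) one route is slick: $\neg$ restricts to a bijective homomorphism from the sub-semiring $(\neg A,\vee,\otimes,0,1)$ onto $(\neg A,\wedge,\oplus,1,0)$, so its inverse $\neg|_{\neg A}$ is a homomorphism in the reverse direction, and composing with the inclusion into $A$ yields~(iv); alternatively one checks directly that $\neg(x\wedge y)=\neg x\vee\neg y$ and $\neg(x\oplus y)=\neg x\otimes\neg y$ on $\neg A$, both coming from Lemma~\ref{lem1}(ix) and the definition of $\oplus$ combined with the involution property. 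For~(vi), using $\neg\neg x=x$ I would compute $\neg x\oplus y=\neg(\neg\neg x\otimes\neg y)=\neg(x\otimes\neg y)$, and then observe that $\neg(x\otimes\neg y)=1$ iff $x\otimes\neg y=0$ iff, by adjointness, $x\leq\neg y\rightarrow0=\neg\neg y=y$.

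The genuinely routine parts are the semiring axioms and the De Morgan computations. The main obstacle, which is really the content of the theorem, is the careful bookkeeping of which identities are available: the double negation law and the dual De Morgan law $\neg(x\wedge y)=\neg x\vee\neg y$ both fail in a general residuated lattice and may be invoked only after restricting to $\neg A$, where $\neg$ is an involution, whereas the multiplicative identity $\neg(x\otimes y)=\neg x\oplus\neg y$ is not derivable at all and is furnished exactly by hypothesis~(i).
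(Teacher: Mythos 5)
Your proposal is correct and follows essentially the same route as the paper: both verify the conditions of Definition~\ref{def4} directly, using Lemma~\ref{lem1} for the De~Morgan law $\neg(x\vee y)=\neg x\wedge\neg y$ and for $\neg\neg\neg x=\neg x$ (so that $\neg$ is an involution on $\neg A$), and consuming hypothesis~(i) exactly where you say, to make $\neg$ a surjective homomorphism onto $(\neg A,\wedge,\oplus,1,0)$. The only cosmetic difference is that the paper packages conditions~(iv) and~(vi) by observing that $(\neg A,\vee,\wedge,\otimes,\rightarrow,0,1)$ is itself a residuated lattice satisfying the double negation law and then citing Lemma~\ref{lem3}, whereas you unfold those computations by hand (or via the inverse of the bijective restricted homomorphism), which is the same content.
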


\begin{proof}
Let $a,b\in A$ and $c,d\in\neg A$. Since $A$ is a subuniverse of ${\mathcal L}$, $(A,\vee,\otimes,0,1)$ is a commutative semiring according to Definition~\ref{def1} and Lemma~\ref{lem1}. Since $\neg A$ and $A$ are subuniverses of ${\mathcal L}$ and (i) holds, we have that $\neg A$ is a subuniverse of $(L,\wedge,\oplus,1,0)$, too. Moreover,
\[
\neg A=\{x\rightarrow0\,|\,x\in A\}\subseteq A.
\]
Since $A$ is a subuniverse of ${\mathcal L}$, $(A,\vee,\wedge)$ is a lattice. Now, according to Lemma~\ref{lem1} and (i) we have
\begin{eqnarray*}
\neg(a\vee b) & = & \neg a\wedge\neg b\mbox{ and} \\
\neg(a\otimes b) & = & \neg a\oplus\neg b.
\end{eqnarray*}
Hence, $\neg$ is a homomorphism from $(A,\vee,\otimes,0,1)$ onto $(\neg A,\wedge,\oplus,1,0)$ and therefore the latter is a commutative semiring, too. Since $\neg A$ is a subuniverse of ${\mathcal L}$, $(\neg A,\vee,\wedge,\otimes,\rightarrow,0,1)$ is a residuated lattice satisfying the double negation law. Hence because of $\neg\neg A\subseteq A$ and Lemma~\ref{lem3}, $\neg|(\neg A)$ is a homomorphism from $(\neg A,\wedge,\oplus,1,0)$ to $(A,\vee,\otimes,0,1)$. Since $c\in\neg A$ we have that $\neg\neg c=c$ because of Lemma~\ref{lem1}. Finally, since $(\neg A,\vee,\wedge,\otimes,\rightarrow,0,1)$ is a residuated lattice satisfying the double negation law, the following are equivalent:
\begin{eqnarray*}
c & \leq & d \\
c\rightarrow d & = & 1 \\
\neg c\oplus d & = & 1
\end{eqnarray*}
Summing up, ${\bf Y}({\mathcal L},A)$ is a tied semiring.
\end{proof}

\begin{Example}\label{ex3}
Let $(A,\leq)$ with $A=\{0,a,1\}$ and $0<a<1$ be a three-element chain and define a binary operation $\rightarrow$ on $A$ by
\[
\begin{array}{c|ccc}
\rightarrow & 0 & a & 1 \\
\hline
0 & 1 & 1 & 1 \\
a & 0 & 1 & 1 \\
1 & 0 & a & 1
\end{array}
\]
It is easy to see that ${\mathcal L}=(A,\vee,\wedge,\wedge,\rightarrow,0,1)$ is a residuated lattice with
\[
\begin{array}{c|ccc}
x & 0 & a & 1 \\
\hline
\neg x & 1 & 0 & 0
\end{array}
\quad\quad\quad
\begin{array}{c|ccc}
\oplus & 0 & a & 1 \\
\hline
0 & 0 & 1 & 1 \\
a & 1 & 1 & 1 \\
1 & 1 & 1 & 1
\end{array}
\]
Hence, ${\mathcal L}$ does not satisfy the double negation law. Put $B:=\neg A=\{0,1\}$. Then $B$ is a subuniverse of ${\mathcal L}$ and $\oplus|B=\vee|B$. Moreover, {\rm(i)} of Theorem~\ref{th6} holds. According to Theorem~\ref{th6},
\[
{\bf Y}({\mathcal L},A)=((A,\vee,\wedge,0,1),(B,\wedge,\vee,1,0),\neg)
\]
is a tied semiring.
\end{Example}

\begin{Example}\label{ex1}
Let ${\mathcal L}_1=(L,\oplus,\neg,0)$ denote the basic algebra with $L=\{0,a,\neg a,b,\neg b,1\}$ and
\[
\begin{array}{r|rrrrrr}
\oplus &      0 &      a & \neg a &      b & \neg b & 1 \\
\hline
     0 &      0 &      a & \neg a &      b & \neg b & 1 \\
		 a &      a &      a &      1 & \neg b & \neg b & 1 \\
\neg a & \neg a &      1 & \neg a & \neg a &      1 & 1 \\
		 b &      b & \neg b & \neg a & \neg a &      1 & 1 \\
\neg b & \neg b & \neg b &      1 &      1 &      1 & 1 \\
     1 &      1 &      1 &      1 &      1 &      1 & 1
\end{array}
\quad\quad\quad
\begin{array}{r|rrrrrr}
     x & 0 &      a & \neg a &      b & \neg b & 1 \\
\hline
\neg x & 1 & \neg a &      a & \neg b &      b & 0
\end{array}
\]
According to {\rm\cite{BH}} every finite basic algebra, hence also ${\mathcal L}_1$, can be considered as an MV-algebra ${\mathcal L}_2=(L,\vee,\wedge,\otimes,\rightarrow,0,1)$ and hence also as a residuated lattice satisfying the double negation law. The operations of ${\mathcal L}_2$ are as follows:
\[
\begin{array}{r|rrrrrr}
  \vee &      0 &      a & \neg a &      b & \neg b & 1 \\
\hline
     0 &      0 &      a & \neg a &      b & \neg b & 1 \\
	   a &      a &      a &      1 & \neg b & \neg b & 1 \\
\neg a & \neg a &      1 & \neg a & \neg a &      1 & 1 \\
     b &      b & \neg b & \neg a &      b & \neg b & 1 \\
\neg b & \neg b & \neg b &      1 & \neg b & \neg b & 1 \\
     1 &      1 &      1 &      1 &      1 &      1 & 1
\end{array}
\quad\quad\quad
\begin{array}{r|rrrrrr}
\wedge & 0 & a & \neg a & b & \neg b & 1 \\
\hline
     0 & 0 & 0 &      0 & 0 &      0 &      0 \\
	   a & 0 & a &      0 & 0 &      a &      a \\
\neg a & 0 & 0 & \neg a & b &      b & \neg a \\
     b & 0 & 0 &      b & b &      b &      b \\
\neg b & 0 & a &      b & b & \neg b & \neg b \\
     1 & 0 & a & \neg a & b & \neg b &      1
\end{array}
\]
\[
\begin{array}{r|rrrrrr}
\otimes & 0 & a & \neg a & b & \neg b & 1 \\
\hline
      0 & 0 & 0 &      0 & 0 &      0 &      0 \\
		  a & 0 & a &      0 & 0 &      a &      a \\
 \neg a & 0 & 0 & \neg a & b &      b & \neg a \\
	    b & 0 & 0 &      b & 0 &      0 &      b \\
 \neg b & 0 & a &      b & 0 &      a & \neg b \\
      1 & 0 & a & \neg a & b & \neg b &      1
\end{array}
\quad\quad\quad
\begin{array}{r|rrrrrr}
\rightarrow &      0 &      a & \neg a &      b & \neg b & 1 \\
\hline
          0 &      1 &      1 &      1 &      1 &      1 & 1 \\
		      a & \neg a &      1 & \neg a & \neg a &      1 & 1 \\
     \neg a &      a &      a &      1 & \neg b & \neg b & 1 \\
	        b & \neg b & \neg b &      1 &      1 &      1 & 1 \\
     \neg b &      b & \neg b & \neg a & \neg a &      1 & 1 \\
          1 &      0 &      a & \neg a &      b & \neg b & 1
\end{array}
\]
Put $A:=\{0,a,\neg a,1\}$. Then $A$ is a subuniverse of ${\mathcal L}_2$ and so is $\neg A=A$. Moreover, {\rm(i)} of Theorem~\ref{th6} holds. Hence
\[
{\bf Y}({\mathcal L}_2,A):=((A,\vee,\otimes,0,1),(\neg A,\wedge,\oplus,1,0),\neg)
\]
is a non-trivial tied semiring.
\end{Example}

Now we can prove a counterpart of the last theorem.

\begin{Theorem}\label{th5}
Let ${\mathcal Y}=((A,\vee,\cdot,0,1),(B,\wedge,\ast,1,0),\alpha)$ be a tied semiring and define
\[
x\rightarrow y:=\alpha(x)\ast y
\]
for all $x,y\in B$. Then
\[
{\bf A}({\mathcal Y}):=(B,\vee,\wedge,\cdot,\rightarrow,0,1)
\]
is a residuated lattice satisfying the double negation law and $\neg x=\alpha(x)$ for all $x\in B$.
\end{Theorem}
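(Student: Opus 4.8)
The plan is to reduce the statement to Theorem~\ref{th2} by showing that, after restricting everything to $B$, the tied semiring becomes an honest general coupled semiring. First I would record the basic structural facts about $\alpha$. Since $\alpha$ maps $A$ \emph{onto} $B$, its image is contained in $B$; in particular $\alpha(B)\subseteq B$. Together with condition~(v) of Definition~\ref{def4}, namely $\alpha(\alpha(x))=x$ for $x\in B$, this shows that $\alpha|B$ is an involutive bijection of $B$ onto itself.

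The crux of the argument is to check that $B$ is closed under $\vee$ and $\cdot$ (it is already closed under $\wedge$ and $\ast$, being the carrier of the second semiring, and it contains $0$ and $1$ as their neutral elements). For $x,y\in B$ I would use that $\alpha(x),\alpha(y)\in B$ and that $\alpha|B$ is a homomorphism from $(B,\wedge,\ast,1,0)$ to $(A,\vee,\cdot,0,1)$ by condition~(iv). Applying $\alpha$ to $\alpha(x)\wedge\alpha(y)\in B$ and to $\alpha(x)\ast\alpha(y)\in B$, and using the involution property, yields
\[
x\vee y=\alpha\bigl(\alpha(x)\wedge\alpha(y)\bigr)\in B,\qquad x\cdot y=\alpha\bigl(\alpha(x)\ast\alpha(y)\bigr)\in B,
\]
so that $B$ is closed under all four operations and contains the constants.

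With closure in hand I would verify that
\[
{\mathcal C}:=\bigl((B,\vee,\cdot,0,1),(B,\wedge,\ast,1,0),\alpha|B\bigr)
\]
satisfies all five conditions of Definition~\ref{def5}. Here $(B,\vee,\cdot,0,1)$ is a subalgebra of the commutative semiring $(A,\vee,\cdot,0,1)$ and hence itself a commutative semiring, while $(B,\wedge,\ast,1,0)$ is a commutative semiring by hypothesis; $(B,\vee,\wedge)$ is a sublattice of $(A,\vee,\wedge)$; the map $\alpha|B$ is a bijective homomorphism from the first semiring to the second (its homomorphism property being inherited from $\alpha$, its inverse being homomorphic by condition~(iv)), hence an isomorphism; condition~(iv) of Definition~\ref{def5} is exactly condition~(v) of Definition~\ref{def4}; and condition~(v) of Definition~\ref{def5} is condition~(vi) of Definition~\ref{def4}. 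Since the arrow introduced in the present theorem is exactly $x\rightarrow y=\alpha|B(x)\ast y$, the algebra ${\bf A}({\mathcal Y})$ coincides with ${\bf L}({\mathcal C})$, and Theorem~\ref{th2} immediately gives that it is a residuated lattice satisfying the double negation law. Finally, reading off the computation in the proof of Theorem~\ref{th2}, $\neg x=x\rightarrow0=\alpha(x)\ast0=\alpha(x)$.

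The one genuinely nontrivial point is the closure of $B$ under $\vee$ and $\cdot$: without it the triple above would not even be a well-defined algebra on $B$. I expect this to be the main obstacle, and it is resolved precisely by the interplay of $\alpha$ being onto $B$ (so $\alpha(B)\subseteq B$) with the involution law, which lets me rewrite joins and products in $B$ as $\alpha$-images of elements of $B$. Everything else is routine subalgebra bookkeeping together with an appeal to the already-established Theorem~\ref{th2}.
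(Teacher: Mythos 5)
Your proof is correct, and it shares with the paper the one genuinely nontrivial computation: the closure of $B$ under $\vee$ and $\cdot$ via $x\vee y=\alpha(\alpha(x)\wedge\alpha(y))\in B$ and $xy=\alpha(\alpha(x)\ast\alpha(y))\in B$, which appears verbatim as the opening step of the paper's own proof. Where you diverge is in everything after that: the paper verifies the residuated-lattice axioms for $(B,\vee,\wedge,\cdot,\rightarrow,0,1)$ directly --- boundedness of the sublattice, the submonoid property, and the adjointness chain $ab\leq c\Leftrightarrow\alpha(ab)\ast c=1\Leftrightarrow(\alpha(a)\ast\alpha(b))\ast c=1\Leftrightarrow\alpha(a)\ast(\alpha(b)\ast c)=1\Leftrightarrow a\leq b\rightarrow c$, which is a word-for-word repetition of the argument already given in Theorem~\ref{th2} --- whereas you observe that $((B,\vee,\cdot,0,1),(B,\wedge,\ast,1,0),\alpha|B)$ is itself a general coupled semiring and invoke Theorem~\ref{th2} once. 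Your reduction is sound: $(B,\vee,\cdot,0,1)$ is a subalgebra of a commutative semiring and the semiring axioms are equational, so it is a commutative semiring; $\alpha|B$ is a bijective homomorphism (injective by the involution, surjective onto $B$ since every $x\in B$ equals $\alpha(\alpha(x))$); and conditions (iv) and (v) of Definition~\ref{def5} are literally conditions (v) and (vi) of Definition~\ref{def4}. The only point you pass over silently is that the order in condition (vi) of Definition~\ref{def4} is that of the lattice $(A,\vee,\wedge)$, while condition (v) of Definition~\ref{def5} refers to the order of $(B,\vee,\wedge)$; these coincide because $B$ is a sublattice ($x\leq y$ iff $x\vee y=y$ in either lattice), so this is harmless but worth a sentence. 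What your route buys is economy and a structural insight the paper leaves implicit --- every tied semiring restricts on $B$ to an honest general coupled semiring, so Theorem~\ref{th5} is a corollary of Theorem~\ref{th2} --- at the modest price of the Definition~\ref{def5} bookkeeping; the paper's direct verification is self-contained but duplicates the adjointness argument.
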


\begin{proof}
Let $a,b,c\in B$. Then
\begin{eqnarray*}
a\vee b & = & \alpha(\alpha(a))\vee\alpha(\alpha(b))=\alpha(\alpha(a)\wedge\alpha(b))\in B\mbox{ and} \\
ab & = & \alpha(\alpha(a))\alpha(\alpha(b))=\alpha(\alpha(a)\ast\alpha(b))\in B.
\end{eqnarray*}
According to Definition~\ref{def5}, $(A,\vee,\wedge)$ is a lattice. Since $0$ is the neutral element with respect to $\vee$, it is the least element of this lattice. Analogously, it follows that $1$ is the greatest element of this lattice. Hence $(A,\vee,\wedge,0,1)$ is a bounded lattice with subuniverse $B$. This shows that $(B,\vee,\wedge,0,1)$ is a bounded lattice, too. Since $(A,\cdot,1)$ is a commutative monoid with subuniverse $B$ we have that $(B,\cdot,1)$ is a commutative monoid, too. Moreover, the following are equivalent:
\begin{eqnarray*}
ab & \leq & c \\
\alpha(ab)\ast c & = & 1 \\
(\alpha(a)\ast\alpha(b))\ast c & = & 1 \\
\alpha(a)\ast(\alpha(b)\ast c) & = & 1 \\
a & \leq & \alpha(b)\ast c \\
a & \leq & b\rightarrow c.
\end{eqnarray*}
Finally,
\[
\neg a=a\rightarrow0=\alpha(a)\ast0=\alpha(a)
\]
and hence
\[
\neg\neg a=\alpha(\alpha(a))=a.
\]
\end{proof}

Authors' addresses:

Ivan Chajda \\
Palack\'y University Olomouc \\
Faculty of Science \\
Department of Algebra and Geometry \\
17.\ listopadu 12 \\
77146 Olomouc \\
Czech Republic \\
ivan.chajda@upol.cz

Helmut L\"anger \\
TU Wien \\
Faculty of Mathematics and Geoinformation \\
Institute of Discrete Mathematics and Geometry \\
Wiedner Hauptstra\ss e 8-10 \\
1040 Vienna \\
Austria \\
helmut.laenger@tuwien.ac.at
\end{document}